\theoremstyle{plain}
\newtheorem{theorem}{Theorem}
\newtheorem{corollary}{Corollary}
\theoremstyle{definition}
\newtheorem{question}{Question}
\renewcommand{\le}{\leqslant}
\renewcommand{\ge}{\geqslant}
\g@addto@macro{\endabstract}{\@setabstract}
\newcommand{\authorfootnotes}{\renewcommand\thefootnote{\@fnsymbol\c@footnote}}%
\begin{document}

\begin{center}
\Large 
On Ramsey numbers of complete graphs with dropped stars
\par\bigskip
\normalsize
\authorfootnotes
Jonathan Chappelon\footnote{Corresponding author}\footnote{E-mail address: jonathan.chappelon@um2.fr}\textsuperscript{1}, Luis Pedro Montejano\footnote{E-mail address: lpmontejano@gmail.com}\textsuperscript{1} and Jorge Luis Ram\'{i}rez Alfons\'{i}n\footnote{E-mail address: jramirez@um2.fr}\textsuperscript{1}
\par\bigskip
\textsuperscript{1}{Universit\'{e} Montpellier 2, Institut de Math\'{e}matiques et de Mod\'{e}lisation de Montpellier, Case Courrier 051, Place Eug\`{e}ne Bataillon, 34095 Montpellier Cedex 05, France}
\par\bigskip
October 18, 2014
\end{center}
\begin{abstract}
Let $r(G,H)$ be the smallest integer $N$ such that for any $2$-coloring (say, red and blue) of the edges of $K_n$, $n\ge N$, there is either a red copy of $G$ or a blue copy of $H$. Let $K_n-K_{1,s}$ be the complete graph on $n$ vertices from which the edges of  $K_{1,s}$ are dropped. In this note we present exact values for $r(K_m-K_{1,1},K_n-K_{1,s})$ and new upper bounds for $r(K_m,K_n-K_{1,s})$ in numerous cases. We also present some results for the Ramsey number of Wheels versus  $K_n-K_{1,s}$.\\[2ex]
\textbf{Keywords:} Ramsey numbers; graph Ramsey numbers.\\
\textbf{MSC2010:} 05C55; 05D10.
\end{abstract}
\section{Introduction}
Let $G$ and $H$ be two graphs. Let $r(G,H)$ be the smallest integer $N$ such that for any $2$-coloring (say, red and blue) of the edges of $K_n$, $n\ge N$ there is either a red copy of $G$ or a blue copy of $H$. Let $K_n-K_{1,s}$ be the complete graph on $n$ vertices from which the edges of  $K_{1,s}$ are dropped. We notice that $K_n-K_{1,1}=K_n-e$ (the complete graph on $n$ vertices from which an edge is dropped) and $K_n-K_{1,2}=K_n-P_3$ (the complete graph on $n$ vertices from which a path on three vertices is dropped). 
\medskip

In this note we investigate $r(K_m-e,K_n-K_{1,s})$ and $r(K_m,K_n-K_{1,s})$ for a variety of integers $m,n$ and $s$. In the next section, we prove our main result (Theorem \ref{mainth}). In Section \ref{sec:1}, we will present exact values for $r(K_m-e,K_n-K_{1,s})$ when $n=3$ or $4$ and some values of $m$ and $s$. In Section \ref{sec:2}, new upper bounds for $r(K_m,K_n-P_3)$ for several integers $m$ and $n$ are given. In Section \ref{sec:3},  we give new upper bounds for $r(K_m,K_n-K_{1,s})$ when $m,s\ge 3$ and several values of $n$. In Section \ref{sec:4}, we present some equalities for $r(K_4,K_n-K_{1,s})$ extending the validity of some results given in \cite{BE89}. Finally, in Section \ref{sec:7}, we will present results concerning the Ramsey number of the Wheel $W_5$ versus  $K_n-K_{1,s}$. We present exact values for $r(W_5,K_6-K_{1,s})$ when $s=3$ and $4$ and the equalities $r(W_5,K_n-K_{1,s})=r(W_5,K_{n-1})$ when $n=7$ and $8$ for some values of $s$.

Some known  values/bounds for specific $r(K_m,K_n)$ needed for this paper are given in the Appendix.
\section{Main result}
Let $G$ be a graph and denote by $G^v$ the graph obtained from $G$ to which a new vertex $v$, incident to all the vertices of $G$, is added. Our main result is the following

\begin{theorem}\label{mainth}
Let $n$ and $s$ be positive integers. Let $G_1$ be any graph and let $N$ be an integer such that $N\ge r(G_1^v,K_n)$. If $\left\lceil\frac{(s+1)(N-n)}{n}\right\rceil\ge r(G_1,K_{n+1}-K_{1,s})$ then $r(G_1^v,K_{n+1}-K_{1,s}) \le N$.
\end{theorem}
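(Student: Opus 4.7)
The plan is to exhibit either a red $G_1^v$ or a blue $K_{n+1}-K_{1,s}$ in any $2$-coloring of the edges of $K_N$. First I would use the hypothesis $N\ge r(G_1^v,K_n)$ to locate, in any such coloring, either a red $G_1^v$ (done) or a blue $K_n$; call its vertex set $B$. So assume such a blue $K_n$ has been fixed.

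Next I would analyse how the $N-n$ remaining vertices attach to $B$. Observe that $K_{n+1}-K_{1,s}$ is obtained from a $K_n$ by adding one new vertex joined to at least $n-s$ of its vertices. Hence, if any vertex $u$ outside $B$ has at least $n-s$ blue edges into $B$, the blue $K_n$ on $B$ together with $u$ yields a blue $K_{n+1}-K_{1,s}$ and we are done. Otherwise, every vertex outside $B$ sends at least $(n)-(n-s-1)=s+1$ red edges into $B$. Counting these red edges gives at least $(s+1)(N-n)$ red edges between $B$ and its complement, so by the pigeonhole principle there is a vertex $b\in B$ having at least $\left\lceil\frac{(s+1)(N-n)}{n}\right\rceil$ red neighbours outside $B$; call this red-neighbourhood $R$.

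Now I would apply the second hypothesis: by assumption $|R|\ge r(G_1,K_{n+1}-K_{1,s})$, so the $2$-colouring restricted to $R$ contains either a blue $K_{n+1}-K_{1,s}$ (and we are done), or a red copy of $G_1$. In the latter case, since every vertex of $R$ is joined to $b$ by a red edge, adjoining $b$ to the red $G_1$ in $R$ produces a red $G_1^v$ (with $b$ playing the role of the apex vertex $v$). In both subcases we have produced one of the desired monochromatic subgraphs, so $r(G_1^v,K_{n+1}-K_{1,s})\le N$.

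The argument is essentially a double pigeonhole and no step is difficult; the only thing one has to get right is the dichotomy applied to the vertices outside $B$ (either ``enough blue edges into $B$'' giving the blue target directly, or ``at least $s+1$ red edges into $B$'' feeding the counting step). Once the threshold $s+1$ is correctly identified, the pigeonhole constant $\lceil (s+1)(N-n)/n\rceil$ in the statement matches exactly what one obtains, and the recursion on $G_1$ closes the proof.
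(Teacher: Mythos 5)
Your argument is correct and is essentially the same as the paper's proof: find the blue $K_n$, split into the case where some outside vertex has at most $s$ red edges into it (yielding the blue $K_{n+1}-K_{1,s}$ directly) and the case where every outside vertex sends at least $s+1$ red edges in, then apply the pigeonhole to a vertex of the blue $K_n$ and use the second hypothesis on its red neighbourhood. No gaps; the threshold $s+1$ and the pigeonhole bound are handled exactly as in the paper.
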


\begin{proof}
Let $K_N$ be a complete graph on $N$ vertices and consider any 2-coloring of the edges of $K_N$ (say, red and blue). We shall show that there is either a $G_1^v$ red or a $K_{n+1}-K_{1,s}$ blue. Since $N\ge r(G_1^v,K_n)$ then $K_N$ has a red $G_1^v$ or a blue $K_n$. In the former case  we are done, so let us suppose that $K_N$ admit a blue $K_n$, that we will denote by $H$.  We have two cases.
\medskip

Case 1) There exists a vertex $u\in V(K_N\setminus H)$ such that $|N_H^r(u)|\le s$ where $N_H^r(u)$ is the set of vertices in $H$ that are joined to $u$ by a red edge. In this case, we may construct the blue graph $G'=K_{n+1}-K_{1,|N_H^r(u)|}$, this is done by taking $H$ (containing $n$ vertices) and vertex $u$ together with the blue edges between $u$ and the vertices of $H$. Now, since  $|N_H^r(u)|\le s$ then the graph $K_{n+1}-K_{1,s}$ is contained in $G'$ (and thus we found a blue $K_{n+1}-K_{1,s}$).
\medskip

Case 2) $|N_H^r(u)|> s$ for every vertex $u\in V(K_N\setminus H)$. Then we have that the number of red edges $\{x,y\}$ with $x\in V(H)$ and $y\in V(K_N\setminus H)$ is at least $(N-n)(s+1)$. So, by the pigeon hole principle, we have that there exists at least one vertex $v\in V(H)$ such that $d_{K_N\setminus H}^r(v)\ge \left\lceil\frac{(s+1)(N-n)}{n}\right\rceil$, where $d_{K_N\setminus H}^r(v)=\left|N_{K_N\setminus H}^r(v)\right|$ and  $N_{K_N\setminus H}^r(v)$ denotes the set of vertices in $K_N\setminus H$ incident to $v$ with a red edge. But since $\left\lceil\frac{(s+1)(N-n)}{n}\right\rceil\ge r(G_1,K_{n+1}-K_{1,s})$ then the graph induced by $N_{K_N\setminus H}^r(v)$ has either a blue $K_{n+1}-K_{1,s}$ (and we are done) or a red $G_1$ to which we add vertex $v$ to find a red $G^v$ as desired.
\end{proof}
\section{Some exact values for $r(K_m-e,K_n-K_{1,s})$}\label{sec:1}
Let $s\ge 1$ be an integer. We clearly have that
$$
r(K_3-e,K_m)\le r(K_3-e,K_{m+1}-K_{1,s}).
$$ 
Since 
$$
r(K_3-e,K_{m+1}-K_{1,s})\le r(K_3-e,K_{m+1}-e)
$$
and  (see \cite{Rad})
$$
r(K_3-e,K_{m})= r(K_3-e,K_{m+1}-e)=2m-1
$$ 
then
$$
\hbox{$r(K_3-e,K_{m+1}-K_{1,s})=2m-1$ for each $s=1,\dots, m-1$.}
$$  
\subsection{Case $m=4$.}
\begin{corollary}\label{exactvalues}
\begin{enumerate}[(a)]
\item[]
\item\label{C1a}
$r(K_4-e,K_5-K_{1,3})=11$.
\item\label{C1b}
$r(K_4-e,K_6-K_{1,s})=16$ for any $3\le s\le 4$.
\item\label{C1c}
$r(K_4-e,K_7-K_{1,s})=21$ for any $4\le s\le 5$.
\end{enumerate}
\end{corollary}

\begin{proof}
(\ref{C1a}) It is clear that $r(K_4-e,K_4)\le r(K_4-e,K_5-K_{1,3})$.  Since  $r(K_4-e,K_4)=11$ (see \cite{Rad}) then $11\le r(K_4-e,K_5-K_{1,3})$. We will now show that $r(K_4-e,K_5-K_{1,3})\le 11$.  By taking $N=11$, $s=3$ and $n=4$, we have that $\left\lceil\frac{(s+1)(N-n)}{n}\right\rceil=\left\lceil\frac{4\times 7}{4}\right\rceil=7 = r(K_3-e,K_5-K_{1,3})$ and so, by Theorem~\ref{mainth}, we have $r(K_4-e,K_5-K_{1,3})\le 11$, and the result follows.
\smallskip

The proofs for (\ref{C1b}) and (\ref{C1c}) are analogues. We just need to check that conditions of Theorem~\ref{mainth} are satisfied by taking : $N=r(K_4-e,K_5)=16$ for (\ref{C1b}) and $N=r(K_4-e,K_6)=21$  for (\ref{C1c}).
\end{proof}

We notice that Corollary~\ref{exactvalues}(\ref{C1a}) is claimed in \cite{Hen} without a proof. Corollary~\ref{exactvalues}(\ref{C1b}) can also be obtained by using that $r(K_4-e,K_6-P_3)=16$ \cite{Hoeth} since $16=r(K_4-e,K_6-P_3)\ge r(K_4-e,K_6-K_{1,s})\ge r(K_4-e,K_5) = 16$ for $s\in\{3,4\}$. Corollary~\ref{exactvalues}(\ref{C1c}) was first posed by Hoeth and Mengersen \cite{Hoeth}. The best known upper bounds for $r(K_4-e,K_7-K_{1,3})$ and $r(K_4-e,K_7-P_3)$  are obtained by applying the following classical recursive formula : 
\begin{equation}\label{recursive2}
r(K_m-e,K_n-K_{1,s})\le r(K_{m-1}-e,K_n-K_{1,s})+r(K_m-e,K_{n-1}-K_{1,s}).
\end{equation}

Hence 
$$
r(K_4-e,K_7-K_{1,3})\le r(K_3-e,K_7-K_{1,3})+ r(K_4-e,K_6-K_{1,3})=11+16=27
$$
and 
$$
r(K_4-e,K_7-P_3) \le r(K_3-e,K_7-P_3)+ r(K_4-e,K_6-P_3) = 11+16 = 27.
$$ 
We are able to improve the above upper bounds.

\begin{corollary}\label{cort3}
$21\le r(K_4-e,K_7-K_{1,3})\le 22$.
\end{corollary}

\begin{proof}
It is clear that $r(K_4-e,K_6)\le r(K_4-e,K_7-K_{1,3})$.  Since  $r(K_4-e,K_6)=21$ (see \cite{Rad}), then $21\le r(K_4-e,K_7-K_{1,3})$. We will now show that $r(K_4-e,K_7-K_{1,3})\le 22$.  By taking $N=22$, $s=3$ and $n=6$, we have that $\left\lceil\frac{(s+1)(N-n)}{n}\right\rceil=\left\lceil\frac{4\times 16}{6}\right\rceil=11=
r(K_3-e,K_7-K_{1,3})$ and so, by Theorem \ref{mainth}, we have that $r(K_4-e,K_7-K_{1,3})\le 22$, and the result follows.
\end{proof}

The above upper bound improves the previously best known one, given by $r(K_4-e,K_7-K_{1,3})\le 27$.

\subsection{Case $m=5$.}
The following equality is claimed in \cite{Hen} without a proof.

\begin{corollary}
$r(K_5-e,K_5-K_{1,3})=19$.
\end{corollary}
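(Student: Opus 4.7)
The proof should follow exactly the same template as Corollary \ref{exactvalues}(1) and Corollary \ref{cort3}(1): establish a matching lower bound from a classical Ramsey number via a trivial containment, then apply Theorem \ref{mainth} once with carefully chosen parameters.

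For the lower bound, I would observe that $K_4$ is a subgraph of $K_5-K_{1,3}$. Indeed, if we delete from $K_5$ the three edges of a $K_{1,3}$ centered at some vertex $u$ with leaves $a,b,c$, then the four vertices $\{a,b,c,d\}$ (where $d$ is the fifth vertex) still induce a full $K_4$. Consequently any $2$-coloring containing no blue $K_5-K_{1,3}$ contains no blue $K_4$ either, which gives $r(K_5-e,K_4)\le r(K_5-e,K_5-K_{1,3})$. Using the value $r(K_5-e,K_4)=19$ recorded in the Appendix yields $r(K_5-e,K_5-K_{1,3})\ge 19$.

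For the upper bound, I would apply Theorem \ref{mainth} with $G_1=K_4-e$, so that $G_1^v=K_5-e$, and with parameters $N=19$, $n=4$, $s=3$. The two hypotheses of Theorem \ref{mainth} to verify are then: (i)~$N=19\ge r(G_1^v,K_n)=r(K_5-e,K_4)=19$, which holds with equality from the Appendix; and (ii)~$\left\lceil\frac{(s+1)(N-n)}{n}\right\rceil=\left\lceil\frac{4\cdot 15}{4}\right\rceil=15\ge r(G_1,K_{n+1}-K_{1,s})=r(K_4-e,K_5-K_{1,3})=11$, where the last equality is Corollary \ref{exactvalues}(1). Both conditions are satisfied, so Theorem \ref{mainth} gives $r(K_5-e,K_5-K_{1,3})\le 19$, which combined with the previous bound yields equality.

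There is no real obstacle: the argument is a direct imitation of Corollary \ref{exactvalues}(1), with $n$ shifted from $4$ to $4$ but $G_1^v$ upgraded from $K_4-e$ to $K_5-e$. The only item one has to import externally is the value $r(K_5-e,K_4)=19$, which the authors cite from the Appendix; everything else is an arithmetic check and an application of Theorem \ref{mainth}.
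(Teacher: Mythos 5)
Your proof is correct and follows essentially the same route as the paper: the identical lower bound via $r(K_5-e,K_4)=19$ and the identical application of Theorem \ref{mainth} with $G_1=K_4-e$, $N=19$, $n=4$, $s=3$, using $r(K_4-e,K_5-K_{1,3})=11$ from Corollary \ref{exactvalues}. The only quibble is sourcing: the value $r(K_5-e,K_4)=19$ is not in the paper's Appendix table (which lists only $r(K_m,K_n)$); the paper cites it from Radziszowski's survey.
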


\begin{proof}
It is clear that $r(K_5-e,K_4)\le r(K_5-e,K_5-K_{1,3})$.  It is known that $r(K_5-e,K_4)=19$ (see \cite{Rad}), then $19\le r(K_5-e,K_5-K_{1,3})$. We will now show that $r(K_5-e,K_5-K_{1,3})\le 19$. By Corollary~\ref{exactvalues}, we have that $r(K_4-e,K_5-K_{1,3})=11$. Then, by taking $N=19$, $s=3$ and $n=4$, we have that$\left\lceil\frac{(s+1)(N-n)}{n}\right\rceil=\left\lceil\frac{4\times 15}{4}\right\rceil=15>r(K_4-e,K_5-K_{1,3})=11$ and so, by Theorem~\ref{mainth}, we have $r(K_5-e,K_5-K_{1,3})\le 19$, and the result follows.
\end{proof}

\begin{corollary}
$r(K_5-e,K_6-K_{1,s})=r(K_5-e,K_5)$ for $s=3,4$.
\end{corollary}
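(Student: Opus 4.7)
The plan is to prove the two inequalities separately, exactly as in the previous corollaries of this section. For the lower bound, I would note that $K_5$ is a subgraph of $K_6-K_{1,s}$ (simply delete the center of the removed star $K_{1,s}$ from $K_6-K_{1,s}$, leaving a $K_5$), so any blue $K_6-K_{1,s}$ contains a blue $K_5$. Hence $r(K_5-e,K_5)\le r(K_5-e,K_6-K_{1,s})$.

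For the upper bound, I would apply Theorem \ref{mainth} with $G_1=K_4-e$, so that $G_1^v=K_5-e$, with $n=5$, and with $N=r(K_5-e,K_5)$. The first hypothesis $N\ge r(G_1^v,K_n)=r(K_5-e,K_5)$ then holds by equality. For the second hypothesis I would invoke Corollary \ref{exactvalues}(2), which gives
\[
r(K_4-e,K_6-K_{1,s})=16\qquad\text{for }s=3,4,
\]
and reduce the task to checking the single numerical inequality
\[
\left\lceil\frac{(s+1)(N-5)}{5}\right\rceil\ge 16,\qquad s\in\{3,4\},
\]
where $N=r(K_5-e,K_5)$ is the value (or lower bound) recorded in the Appendix. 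For $s=4$ the left-hand side is exactly $N-5$, so this amounts to $N\ge 21$; for $s=3$ it amounts to $N\ge 24$. Once both are verified using the known value of $r(K_5-e,K_5)$, Theorem \ref{mainth} yields $r(K_5-e,K_6-K_{1,s})\le N=r(K_5-e,K_5)$, which combined with the lower bound gives equality.

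The main (and essentially only) obstacle is the arithmetic verification in the last step: the proof relies on $r(K_5-e,K_5)$ being large enough that the pigeonhole quantity $\lceil(s+1)(N-5)/5\rceil$ reaches the threshold $16$. This is presumably exactly why the corollary is restricted to $s=3,4$: for smaller $s$ the required inequality would fail, while for $s\ge 5$ the graph $K_6-K_{1,s}$ no longer contains $K_5$ and the lower bound argument breaks down.
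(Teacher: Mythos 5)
Your proposal is correct and follows essentially the same route as the paper: both apply Theorem \ref{mainth} with $G_1=K_4-e$, $n=5$ and $N=r(K_5-e,K_5)$ (the paper handles only $s=3$ and deduces $s=4$ by monotonicity, and uses the cruder bound $r(K_4-e,K_6-K_{1,3})\le 17$ instead of your $=16$, but these are cosmetic differences). The one check you defer closes immediately, since $r(K_5-e,K_5)\ge 30\ge 24$ is known.
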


\begin{proof}
It is clear that $r(K_5-e,K_5)\le r(K_5-e,K_6-K_{1,s})$ for all $s\ge 1$. Let us now prove that $r(K_5-e,K_5)\ge r(K_5-e,K_6-K_{1,s})$ for $s=3,4$. Since $r(K_5-e,K_6-K_{1,4})\le r(K_5-e,K_6-K_{1,3})$ then it is sufficient to prove that $r(K_5-e,K_6-K_{1,3})\le r(K_5-e,K_5)$. For, let $N=r(K_5-e,K_5)\ge 30$ (this lower bound was proved by Exoo \cite{Exoo}). Since $N\ge 30$ then if $s=3$ and $n=5$ we obtain that $\left\lceil\frac{(s+1)(N-n)}{n}\right\rceil\ge\left\lceil\frac{4\times 25}{5}\right\rceil=20>17\ge r(K_4-e,K_6-K_{1,3})$ (see \cite{Rad} or Corollary~\ref{exactvalues}(\ref{C1b}) for the last inequality). So, by Theorem~\ref{mainth}, we obtain that $r(K_5-e,K_6-K_{1,3})\le N=r(K_5-e,K_5)$.
\end{proof}

We notice that in the case $s=2$, if $r(K_5-e,K_5)\ge 32$ then we may obtain that $r(K_5-e,K_6-K_{1,2})=r(K_5-e,K_5)$ (by using the same arguments as above). It is known that $r(K_5-e,K_5)\ge 30$.

\section{New upper bounds for $r(K_m,K_n-P_3)$}\label{sec:2}

In this section we will apply our main result to give new upper bounds for $r(K_m,K_n-P_3)$ in numerous cases. The value of $r(K_n,K_m-P_3)$ have already been studied in some cases. In \cite{B11,Calvert}, it is proved that $r(K_5,K_5-P_3)=25$ and in \cite{Clan} it is shown that $r(K_4,K_5-P_3)=r(K_4,K_4)=18$.
\medskip

Let us first notice that, by taking $G_1=K_m$ in Theorem~\ref{mainth}, we obtain

\begin{corollary}\label{mainc}
Let $N$ be an integer such that $N\ge r(K_{m+1},K_n)$. If $\left\lceil\frac{(s+1)(N-n)}{n}\right\rceil\ge r(K_m,K_{n+1}-K_{1,s})$ then $r(K_{m+1},K_{n+1}-K_{1,s})\le N$.
\end{corollary}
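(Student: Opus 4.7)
The plan is to derive this statement as an immediate specialization of Theorem \ref{mainth}, choosing $G_1 = K_m$. The single observation that makes everything go through is that $K_m^v$, the graph obtained from $K_m$ by attaching a new vertex $v$ adjacent to all $m$ vertices of $K_m$, is by construction the complete graph on $m+1$ vertices, i.e.\ $K_m^v = K_{m+1}$.

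With this substitution, the first hypothesis $N\ge r(G_1^v,K_n)$ of Theorem \ref{mainth} becomes exactly $N\ge r(K_{m+1},K_n)$, and the second hypothesis $\lceil (s+1)(N-n)/n\rceil \ge r(G_1,K_{n+1}-K_{1,s})$ becomes $\lceil (s+1)(N-n)/n\rceil \ge r(K_m,K_{n+1}-K_{1,s})$. The conclusion $r(G_1^v,K_{n+1}-K_{1,s})\le N$ thus reads $r(K_{m+1},K_{n+1}-K_{1,s})\le N$, which is precisely the corollary.

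There is no genuine obstacle here: the statement is a packaging of Theorem \ref{mainth} tailored to the applications that follow in Section \ref{sec:2}, where bounds on $r(K_m,K_{n+1}-K_{1,s})$ are bootstrapped from known values of $r(K_{m+1},K_n)$ and smaller Ramsey numbers of the same family. Accordingly, the proposed proof consists of one line: apply Theorem \ref{mainth} with $G_1 = K_m$ and invoke the identification $K_m^v = K_{m+1}$.
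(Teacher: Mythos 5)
Your proposal is correct and is exactly the paper's argument: the corollary is obtained by setting $G_1=K_m$ in Theorem \ref{mainth} and observing that $K_m^v=K_{m+1}$.
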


The case when $m=3$ has already been studied in \cite{BBH98} where it is proved that 
$$
\hbox{$r(K_3,K_{n+1}-K_{1,s})=r(K_3,K_n)$ if $n\ge s+1>(n-1)(n-2)/(r(3,n)-n)$.}
$$
As a consequence,  we have
\begin{equation}\label{eq1}
\begin{array}{ll}
r(K_3,K_{6}-P_3)=r(K_3,K_5) & (\text{with } n=5 \text{ and } s=2),\\
r(K_3,K_{7}-K_{1,3})=r(K_3,K_6) & (\text{with } n=6 \text{ and } s=3),\\
r(K_3,K_{10}-K_{1,s})=r(K_3,K_9) & (\text{with } n=9 \text{ for any } 2\le s\le 9),\\
r(K_3,K_{11}-K_{1,s})=r(K_3,K_{10}) & (\text{with } n=10 \text{ for any } 3\le s\le 10).\\
\end{array}
\end{equation}

\subsection{Results on $r(K_m,K_5-P_3)$}

In \cite[Theorem 4]{BE89}, it was shown that if $n\ge m\ge 3$ and $m+n\ge 8$, then
\begin{equation}\label{eqer}
\hbox{$r(K_{m+1}-K_{1,m-p},K_{n+1}-K_{1,n-q})=r(K_m,K_n)$ where $p=\left\lceil\frac{m}{n-1}\right\rceil$ and $q=\left\lceil\frac{n}{m-1}\right\rceil$.}
\end{equation}

This result implies the following 

\begin{corollary}\label{erdos}
Let $n\ge m\ge 3$ and $m+n\ge 8$ and let $p=\left\lceil\frac{m}{n-1}\right\rceil$ and $q=\left\lceil\frac{n}{m-1}\right\rceil$. Then,
$$
r(K_{m},K_{n+1}-K_{1,n-q})=r(K_{m+1}-K_{1,m-p},K_n)=r(K_m,K_n).
$$
\end{corollary}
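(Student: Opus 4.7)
The plan is to deduce the corollary directly from the cited equality \eqref{eqer} by a monotonicity argument on Ramsey numbers. The essential observation is that $K_m$ is a subgraph of $K_{m+1}-K_{1,m-p}$: indeed, $K_{m+1}-K_{1,m-p}$ is obtained from $K_{m+1}$ by deleting $m-p$ edges that share a common endpoint $v$, so the $m$ vertices other than $v$ induce a complete $K_m$. By the same reasoning, $K_n$ is a subgraph of $K_{n+1}-K_{1,n-q}$.

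Recalling that $r(G,H)\le r(G',H')$ whenever $G\subseteq G'$ and $H\subseteq H'$, I would first chain
$$r(K_m,K_n)\le r(K_m,K_{n+1}-K_{1,n-q})\le r(K_{m+1}-K_{1,m-p},K_{n+1}-K_{1,n-q}),$$
and then invoke \eqref{eqer} to see that the right-hand side equals $r(K_m,K_n)$. Thus equality holds throughout, yielding $r(K_m,K_{n+1}-K_{1,n-q})=r(K_m,K_n)$. An entirely analogous chain,
$$r(K_m,K_n)\le r(K_{m+1}-K_{1,m-p},K_n)\le r(K_{m+1}-K_{1,m-p},K_{n+1}-K_{1,n-q})=r(K_m,K_n),$$
gives the second equality $r(K_{m+1}-K_{1,m-p},K_n)=r(K_m,K_n)$.

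There is really no hard step here: the argument is purely the sandwich provided by subgraph monotonicity together with the Erd\H{o}s--Bondy result \eqref{eqer}. The only thing to be careful about is verifying the subgraph containment (which uses that in $K_{m+1}-K_{1,m-p}$ all removed edges are incident to a single vertex) and that the hypotheses $n\ge m\ge 3$ and $m+n\ge 8$ are exactly what \eqref{eqer} requires, so no additional side conditions need to be checked.
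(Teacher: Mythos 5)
Your argument is exactly the paper's proof: the same sandwich $r(K_m,K_n)\le r(K_{m},K_{n+1}-K_{1,n-q})\le r(K_{m+1}-K_{1,m-p},K_{n+1}-K_{1,n-q})=r(K_m,K_n)$ via subgraph monotonicity and \eqref{eqer}, with the symmetric chain for the other equality. Correct and complete.
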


\begin{proof} We clearly have
$$
r(K_m,K_n)\le r(K_{m},K_{n+1}-K_{1,n-q})\le r(K_{m+1}-K_{1,m-p},K_{n+1}-K_{1,n-q})\stackrel{\eqref{eqer}}{=}r(K_m,K_n)
$$
and thus $r(K_{m},K_{n+1}-K_{1,n-q})=r(K_m,K_n)$ (the proof for $r(K_{m+1}-K_{1,m-p},K_n)=r(K_m,K_n)$ is similar).
\end{proof}

By taking $m=n=4$ (and thus $q=2$) in Corollary~\ref{erdos} we have that
$$
r(K_4,K_5-P_3)=r(K_4,K_4)=18.
$$
It is also known \cite{B11} that 
$$
r(K_5,K_5-P_3)=r(K_5,K_4)=25,
$$ 
and, by Corollary~\ref{erdos}, we have
\begin{equation}\label{eq1a}
\begin{array}{ll}
r(K_6,K_{4}-P_3)=r(K_6,K_3)=18 & (\text{with } m=5 \text{ and } n=3),\\
r(K_7,K_{4}-P_3)=r(K_7,K_3)=23 & (\text{with } m=6 \text{ and } n=3),\\
r(K_8,K_{4}-P_3)=r(K_8,K_3)=28 & (\text{with } m=7 \text{ and } n=3),\\
r(K_9,K_{4}-P_3)=r(K_9,K_3)=36 & (\text{with } m=8 \text{ and } n=3),\\
r(K_{10},K_{4}-P_3)=r(K_{10},K_3)\le 43 & (\text{with } m=9 \text{ and } n=3).\\
\end{array}
\end{equation}

The best known upper bounds of $r(K_n,K_5-P_3)$ for $n\ge 6$ are obtained by applying the following classical recursive formula : 
\begin{equation}\label{recursive}
r(K_m,K_n-K_{1,s})\le r(K_{m-1},K_n-K_{1,s})+r(K_m,K_{n-1}-K_{1,s}).
\end{equation}

By using \eqref{eq1a}, we obtain
$$
\begin{array}{l}
r(K_6,K_5-P_3)\le r(K_5,K_5-P_3)+r(K_6,K_4-P_3)=25+r(K_6,K_3)=25+18=43, \\
r(K_7,K_5-P_3)\le r(K_6,K_5-P_3)+r(K_7,K_4-P_3)= 43+23=66, \\
r(K_8,K_5-P_3) \begin{array}[t]{l} \le r(K_7,K_5-P_3)+r(K_8,K_4-P_3) \\
\le r(K_6,K_5-P_3)+r(K_7,K_4-P_3)+28=43+23+28=94,
\end{array} \\
r(K_9,K_5-P_3)\le r(K_8,K_5-P_3)+r(K_9,K_4-P_3)=94+36=130, \\
r(K_{10},K_5-P_3) \begin{array}[t]{l} \le r(K_9,K_5-P_3)+r(K_{10},K_4-P_3) \\
\le r(K_8,K_5-P_3)+r(K_9,K_4-P_3)+43=94+36+43=173. \\
\end{array}
\end{array}
$$
We are able to improve all the above upper bounds.

\begin{corollary}\label{cor1}
\begin{enumerate}[(a)]
\item[]
\item\label{C7a}
$r(K_6,K_5-P_3)\le 41$.
\item\label{C7b}
$r(K_7,K_5-P_3)\le 61$.
\item\label{C7c}
$r(K_8,K_{5}-P_3)\le 85$.
\item\label{C7d}
$r(K_9,K_{5}-P_3)\le 117$.
\item\label{C7e}
$r(K_{10},K_{5}-P_3)\le 159$.
\end{enumerate}
\end{corollary}

\begin{proof}
(\ref{C7a}) It is known that  $r(K_6,K_4)\le 41$. Then, by taking $N=41$, $s=2$ and $n=4$, we have that $\left\lceil\frac{(s+1)(N-n)}{n}\right\rceil=\left\lceil\frac{3\times 37}{4}\right\rceil=28 > r(K_5,K_5-P_3)=25$ and so, by Corollary~\ref{mainc}, the result follows.
\smallskip

The proofs for the rest of the cases are analogues. We just need to check that conditions are satisfied  by taking: $N=61\ge r(K_7,K_4)$ for (\ref{C7b}), $N=85>84\ge r(K_8,K_4)$ for (\ref{C7c}), $N=117>115\ge r(K_9,K_4)$ for (\ref{C7d}) and $N=159>149\ge r(K_{10},K_4)$ for (\ref{C7e}).
\end{proof}

By applying recursion \eqref{recursive} to $r(K_{11},K_{5}-P_3)$ one may obtain that $r(K_{11},K_{5}-P_3)\le 224$ if the old known values are used in the recursion, and it can be improved to $r(K_{11},K_{5}-P_3)\le 210$ by using the new values given in Corollary~\ref{cor1}. The latter beats the upper bound $r(K_{11},K_{5}-P_3)\le 215$ obtained via Corollary~\ref{mainc}.
\medskip

We can also use Corollary~\ref{mainc} to give the following equality.

\begin{corollary}
If $37\le r(K_6,K_4)$ then $r(K_6,K_5-P_3)=r(K_6,K_4)$.
\end{corollary}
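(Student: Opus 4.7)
The plan is to establish the two inequalities $r(K_6,K_4) \le r(K_6,K_5-P_3)$ and $r(K_6,K_5-P_3) \le r(K_6,K_4)$ separately; the upper bound is the substantive one, and I would obtain it by a direct application of Corollary \ref{mainc} with the parameter choice $m=5$, $n=4$, $s=2$.

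For the lower bound, I would observe that $K_5 - P_3$ contains a copy of $K_4$. Indeed, if the deleted $P_3$ has vertex set $\{a,b,c\}$ with edges $ab$ and $bc$, then the two remaining vertices together with $a$ and $c$ induce a $K_4$ in $K_5 - P_3$. Consequently, any blue $K_5-P_3$ yields a blue $K_4$, and so the standard monotonicity of Ramsey numbers gives $r(K_6,K_4) \le r(K_6,K_5-P_3)$.

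For the upper bound, set $N = r(K_6,K_4)$ and invoke Corollary \ref{mainc} with $m=5$, $n=4$, $s=2$. The first hypothesis, $N \ge r(K_{m+1},K_n) = r(K_6,K_4)$, holds by the choice of $N$. For the second hypothesis, I need $\left\lceil \tfrac{(s+1)(N-n)}{n} \right\rceil = \left\lceil \tfrac{3(N-4)}{4} \right\rceil \ge r(K_5,K_5-P_3) = 25$, using the known value of $r(K_5,K_5-P_3)$ recalled in Section \ref{sec:2}. The assumption $N \ge 37$ is precisely what makes this arithmetic work: $\left\lceil \tfrac{3 \cdot 33}{4} \right\rceil = 25$, so the inequality holds at the threshold and a fortiori for all larger $N$. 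Corollary \ref{mainc} then delivers $r(K_6,K_5-P_3) \le N = r(K_6,K_4)$.

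The only non-routine step is the verification that the numerical hypothesis of Corollary \ref{mainc} is satisfied, which is exactly what dictates the assumption $r(K_6,K_4) \ge 37$ in the statement; everything else is immediate from the general machinery and the already-established value $r(K_5,K_5-P_3)=25$.
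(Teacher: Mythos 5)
Your proposal is correct and follows essentially the same route as the paper: the lower bound by monotonicity (since $K_4 \subseteq K_5-P_3$), and the upper bound by applying Corollary \ref{mainc} with $N=r(K_6,K_4)\ge 37$, $s=2$, $n=4$, checking $\left\lceil \tfrac{3\times 33}{4}\right\rceil = 25 = r(K_5,K_5-P_3)$. Nothing to add.
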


\begin{proof}
It is clear that $r(K_6,K_4)\le r(K_6,K_5-P_3)$. We show that $r(K_6,K_5-P_3)\le r(K_6,K_4)$. Let $N=r(K_6,K_4)\ge 37$. Since $N\ge 37$ and by taking $s=2$ and $n=4$ we have
$\left\lceil\frac{(s+1)(N-n)}{n}\right\rceil\ge \left\lceil\frac{3\times 33}{4}\right\rceil=25=r(K_5,K_5-P_3)$, and so, by Corollary~\ref{mainc}, $r(K_6,K_5-P_3)\le N=r(K_6,K_4)$. 
\end{proof}

It is known that $36\le r(K_6,K_4)$. In the case when $r(K_6,K_4)=36$ the above result might not hold. 

\subsection{Results on $r(K_m,K_6-P_3)$}

Since $r(K_3,K_5)=14$ then, by \eqref{eq1} we have $r(K_3,K_6-P_3)=14$ \cite{Faudree}. So, by \eqref{recursive}, we have
$$
r(K_4,K_6-P_3)\le r(K_3,K_6-P_3)+r(K_4,K_5-P_3)=14+18=32.
$$ 
Moreover, it is known that the upper bound is strict if the terms of the right side are even, which is our case, and so, $r(K_4,K_6-P_3)\le 31$.

\begin{corollary}\label{ccc1}
\begin{enumerate}[(a)]
\item[]
\item\label{C9a}
$25\le r(K_4,K_6-P_3)\le 27$.
\item\label{C9b}
$r(K_5,K_6-P_3)\le 49$.
\item\label{C9c}
$r(K_6,K_6-P_3)\le 87$.
\end{enumerate}
\end{corollary}

\begin{proof}
(\ref{C9a}) We clearly have that $25=r(K_4,K_5)\le r(K_4,K_6-P_3)$. It is known that $r(K_4,K_5)=25$. We take $N=27>r(K_4,K_5)$, $s=2$ and $n=5$. So, $\left\lceil\frac{(s+1)(N-n)}{n}\right\rceil=\left\lceil\frac{3\times 22}{5}\right\rceil=14=r(K_3,K_6-P_3)$ and so, by Corollary~\ref{mainc}, $r(K_4,K_6-P_3)\le 27$.
\smallskip

The proofs for (\ref{C9b}) and (\ref{C9c}) are analogues. We just need to check that conditions of Corollary~\ref{mainc} are satisfied  by taking: $N=49\ge r(K_5,K_5)$ for (\ref{C9b}) and $N=87\ge r(K_6,K_5)$ for (\ref{C9c}).
\end{proof}

The recursive formula~\eqref{recursive} gives now (by using the new above values) $r(K_{7},K_{6}-P_3)\le 148$ (before, by using the old values, it gave $158$). This new upper bound beats the upper bound $r(K_{7},K_{6}-P_3)\le 149$ obtained by Corollary~\ref{mainc}.

\subsection{Results on $r(K_m,K_n-P_3)$ for a variety of $m$ and $n$}

\begin{corollary}
For each $3\le m\le 5$ and each $7\le n\le 16$, we have that $r(K_m,K_n-P_3)\le u(m,n)$, where the value of $u(m,n)$ is given in the $(m,n)$ entry of the below table (the value between parentheses is the best previously known upper bound).
\begin{center}
\tiny
\noindent\makebox[\textwidth]{
\begin{tabular}{|c|c|c|c|c|c|c|c|c|c|c|}
\hline
 $m\setminus n$ & $7$ & $8$ & $9$ & $10$ & $11$ & $12$ & $13$ & $14$ & $15$ & $16$ \\
\hline
 $3$ & & & & & $44(47)$ & $52(59)$ & $61(72)$ & $70(86)$ & $80(101)$ & $91(117)$ \\
\hline
 $4$ & $41(49)$ & $61(72)$ & & $115(136)$ & $154(183)$ & $199(242)$ & $253(319)$ & $313(405)$ & $383(506)$ & $466(623)$ \\
\hline
 $5$ & $87(105)$ & $143(177)$ & $222(277)$ & & & & & &  & \\
\hline
\end{tabular}}
\end{center}
\end{corollary}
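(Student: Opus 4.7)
The plan is to fill in the table one row at a time, in order of increasing $m$, and within each row in order of increasing $n$, so that each newly derived bound can serve as input to later ones. Three ingredients would suffice: (i) the equality $r(K_3,K_{n+1}-K_{1,s})=r(K_3,K_n)$ from \cite{BBH98}, valid whenever $n\ge s+1 > (n-1)(n-2)/(r(K_3,K_n)-n)$; (ii) Corollary \ref{mainc} specialized to $s=2$; and (iii) the classical recursion \eqref{recursive}. For each entry $(m,n)$ I would take for $u(m,n)$ the smaller of the bounds produced by (ii) and (iii), after verifying the routine numerical conditions each imposes.

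For the first row $m=3$, I would apply (i) directly with $s=2$. The condition $3 > (n-2)(n-3)/(r(K_3,K_{n-1})-(n-1))$ needs to be checked for $n\in\{11,\ldots,16\}$, but since the best known lower bounds on $r(K_3,K_{n-1})$ grow essentially like $(n-1)^2/\log(n-1)$, the right-hand side is small and the inequality is immediate. Hence $u(3,n)$ is simply the best known upper bound on $r(K_3,K_{n-1})$, taken from the Appendix.

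For the rows $m=4$ and $m=5$, I would apply Corollary \ref{mainc} with $s=2$, substituting $(m-1,n-1)$ for $(m,n)$ in its statement. Choosing $N=u(m,n)$, I would verify (a) $N\ge r(K_m,K_{n-1})$, using the Appendix, and (b) $\lceil 3(N-n+1)/(n-1)\rceil \ge r(K_{m-1},K_n-P_3)$, bounding the right-hand side by the value $u(m-1,n)$ just established. Both are elementary arithmetic checks. In several cases, as already illustrated in the paragraphs after Corollary \ref{cor1} and Corollary \ref{ccc1}, the recursion \eqref{recursive} applied to the freshly obtained values yields a sharper estimate than Corollary \ref{mainc} alone, and the smaller of the two would be recorded in the table.

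The main obstacle is not conceptual but bookkeeping: each of the roughly twenty-five entries requires separately fixing $N$, choosing between methods (ii) and (iii), and performing a short numerical verification against data in the Appendix. A mild subtlety is that for the larger entries one may need to iterate — first produce a preliminary bound via Corollary \ref{mainc}, then feed it into the recursion, and possibly re-apply Corollary \ref{mainc} with the updated input — but no new ideas beyond those of the preceding sections are required.
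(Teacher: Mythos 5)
Your treatment of the rows $m=4$ and $m=5$ coincides with the paper's: each entry is obtained from Corollary \ref{mainc} with $s=2$, taking $N=u(m,n)$, checking $N\ge r(K_m,K_{n-1})$ against the Appendix, and feeding in the bound from the row above. The gap is in the row $m=3$. You propose to obtain it from the Brandt--Brinkmann--Harmuth equality, asserting that the condition $s+1>(n-1)(n-2)/(r(3,n)-n)$ is ``immediate'' because the lower bounds on $r(3,n)$ grow like $n^2/\log n$. This is backwards: precisely because $r(3,n)=\Theta(n^2/\log n)$, the right-hand side is $\Theta(\log n)$ and therefore \emph{increases} with $n$, so for fixed $s=2$ the condition eventually becomes unverifiable (indeed false). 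Concretely, to certify $r(K_3,K_{15}-P_3)=r(3,14)$ one needs $3>156/(r(3,14)-14)$, i.e.\ $r(3,14)\ge 67$, while the known lower bound is only $66$; for $r(K_3,K_{16}-P_3)=r(3,15)$ one needs $r(3,15)\ge 76$ against a known lower bound of $73$. So your ingredient (i) cannot be carried out for the entries $(3,15)$ and $(3,16)$, and your fallback (iii), the recursion \eqref{recursive}, only yields $15+70=85$ and $16+80=96$, both worse than the claimed $80$ and $91$.

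The missing idea is that Corollary \ref{mainc} (equivalently, Theorem \ref{mainth} with $G_1=K_2$) applies to the row $m=3$ as well: its hypothesis reduces to $N\ge r(3,n-1)$ together with $\left\lceil 3(N-n+1)/(n-1)\right\rceil\ge r(K_2,K_n-P_3)=n$, and the smallest admissible $N$ gives exactly the tabulated values $44,52,61,70,80,91$. This matters beyond row $3$, since those values are the inputs for the entries $(4,11)$ through $(4,16)$: for instance $(4,15)$ requires $\left\lceil 3(383-14)/14\right\rceil=80\ge r(K_3,K_{15}-P_3)$, which your bound of $85$ does not supply, so part of your $m=4$ row would degrade as well. (As a side remark, in the columns where the Brandt--Brinkmann--Harmuth condition \emph{can} be verified for $s=2$, e.g.\ $n=12,13,14$, your route would actually give slightly sharper bounds than the table, namely the known upper bounds on $r(3,n-1)$; but that does not rescue the two entries above.)
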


\begin{proof}
We just need to check that conditions of Corollary~\ref{mainc} are satisfied  by taking: $N=41\ge r(K_4,K_6)$ for $u(4,7)$, $N=87\ge r(K_5,K_6)$ for $u(5,7)$, $N=61\ge r(K_4,K_7)$ for $u(4,8)$, $N=143\ge r(K_5,K_7)$ for $u(5,8)$, $N=222>216\ge r(K_5,K_8)$ for $u(5,9)$, $N=115\ge r(K_4,K_9)$ for $u(4,10)$, $N=47>42\ge r(K_3,K_{10})$ for $u(3,11)$, $N=154>149\ge r(K_4,K_{10})$ for $u(4,11)$, $N=52>51\ge r(K_3,K_{11})$ for $u(3,12)$, $N=199>191\ge r(K_4,K_{11})$ for $u(4,12)$, $N=61>59\ge r(K_3,K_{12})$ for $u(3,13)$, $N=253>238\ge r(K_4,K_{12})$ for $u(4,13)$, $N=70>69\ge r(K_3,K_{13})$ for $u(3,14)$, $N=313>291\ge r(K_4,K_{13})$ for $u(4,14)$, $N=80>78\ge r(K_3,K_{14})$ for $u(3,15)$, $N=383>349\ge r(K_4,K_{14})$ for $u(4,15)$,  $N=91>88\ge r(K_3,K_{15})$ for $u(3,16)$, $N=466>417\ge r(K_4,K_{15})$ for $u(4,16)$.
\end{proof}

\section{Some bounds for $r(K_m,K_n-K_{1,s})$ when $s\ge 3$}\label{sec:3}
Here, we will focus our attention to  upper bounds for $r(K_m,K_n-K_{1,3})$ that yields to upper bounds for $r(K_m,K_n-K_{1,s})$ when $s\ge 4$ since
$$
\hbox{$r(K_m,K_n-K_{1,s})\le r(K_m,K_n-K_{1,3})$ for all $s\ge 4$.}
$$

\subsection{Results on $r(K_m,K_6-K_{1,3})$.}
In \cite{BE89} it was proved that $r(K_5,K_6-K_{1,3})=r(K_5,K_5)\le 49$. So by \eqref{recursive} we have
$$
r(K_6,K_6-K_{1,3})\le r(K_5,K_6-K_{1,3})+r(K_6,K_5-K_{1,3})=49+41=90.
$$

\begin{corollary}
For each $6\le m\le 15$, we have that $r(K_m,K_6-K_{1,3})\le u(m)$, where
the value of $u(m)$ is given in the below table (the value between parentheses is the best previously known upper bound).
\begin{center}
\tiny
\noindent\makebox[\textwidth]{
\begin{tabular}{|c|c|c|c|c|c|c|c|c|c|c|}
\hline
 $m$ & $6$ & $7$ & $8$ & $9$ & $10$ & $11$ & $12$ & $13$ & $14$ & $15$ \\
\hline
 $b_u$ & $87(90)$ & $143(151)$ & $216(235)$ & $316(350)$ & $442(499)$ & $633(690)$ & $848(928)$ & $1139(1219)$ & $1461(1568)$ & $1878(1568)$ \\
\hline
\end{tabular}}
\end{center}
\end{corollary}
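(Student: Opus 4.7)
The plan is to establish each entry by applying Corollary \ref{mainc} iteratively, with the parameters fixed at $n=5$ and $s=3$ throughout. The base of the induction is the inequality $r(K_5, K_6 - K_{1,3}) \le 49$, recorded just before the statement as a consequence of \cite{BE89} together with $r(K_5,K_5)\le 49$.

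For the inductive step at a given $m$ with $6 \le m \le 15$, set $N := u(m)$, chosen to match the best upper bound on $r(K_m, K_5)$ listed in the Appendix. The first hypothesis of Corollary \ref{mainc} then reads $N \ge r(K_m, K_5)$, which holds by the very choice of $N$. The second hypothesis, namely
$$\left\lceil \frac{4(N-5)}{5} \right\rceil \ge r(K_{m-1}, K_6 - K_{1,3}),$$
is verified by invoking the inductive bound $r(K_{m-1}, K_6 - K_{1,3}) \le u(m-1)$ from the preceding entry in the table. Concretely, for $m=6$ one computes $\lceil 4\cdot 82/5\rceil = 66 \ge 49$; for $m=7$, $\lceil 4\cdot 138/5\rceil = 111 \ge 87$; for $m=8$, $\lceil 4\cdot 211/5\rceil = 169 \ge 143$; and so on, all the way up to $m=15$, where $\lceil 4\cdot 1873/5\rceil = 1499 \ge 1461$. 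Since the table values grow by a factor well above $5/4$ at each step, condition (ii) holds comfortably at every stage.

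I would present the proof as a single paragraph of ten such numerical verifications, very much in the spirit of the proofs given for Corollaries \ref{cor1} and \ref{ccc1} in Section \ref{sec:2}. The main obstacle is simply careful bookkeeping: each step needs the correct upper bound on $r(K_m, K_5)$ to be cited from the Appendix (in particular for the largest case $m=15$, where one must confirm that $r(K_{15},K_5)\le 1878$ is available, e.g.\ via a further application of the classical recursion \eqref{recursive}), and the resulting ceiling quantity must be confirmed to dominate the preceding table value $u(m-1)$. No additional ideas beyond the repeated use of Corollary \ref{mainc} and the initial value $49$ are required.
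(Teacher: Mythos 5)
Your proposal is correct and follows exactly the paper's route: the paper's one-line proof is precisely to apply Corollary \ref{mainc} with $n=5$, $s=3$, taking $N$ to be the best known upper bound on $r(K_m,K_5)$ and feeding in the previous table entry (starting from $r(K_5,K_6-K_{1,3})\le 49$) to verify the ceiling condition. Your numerical checks are accurate, so nothing further is needed.
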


\begin{proof}
It follows by Corollary~\ref{mainc} and by taking $N$ as the best known upper bound of $r(K_n,K_5)$ for each $n=6,\dots ,15$.
\end{proof}

We notice that by using similar arguments as above, we could prove that  $r(K_6,K_6-K_{1,3})=r(K_6,K_5)$ if $66\le r(K_6,K_5)$ .

\subsection{Results on $r(K_m,K_7-K_{1,3})$}
In \cite{BBH98} it was proved that $r(K_3,K_7-K_{1,3})=18$. Since $r(K_3,K_6)=18$ then, by \eqref{eq1} we have $r(K_3,K_7-K_{1,3})=18$. So, by \eqref{recursive}, we have
$$
r(K_4,K_7-K_{1,3})\le r(K_3,K_7-K_{1,3})+r(K_4,K_6-K_{1,3})=18+25=43.
$$

\begin{corollary}
For each $4\le m\le 11$, we have that $r(K_m,K_7-K_{1,3})\le u(m)$, where
the value of $u(m)$ is given in the below table (the value between parentheses is the best previously known upper bound).
\begin{center}
\noindent\makebox[\textwidth]{
\begin{tabular}{|c|c|c|c|c|c|c|c|c|}
\hline
 $m$ & $4$ & $5$ & $6$ & $7$ & $8$ & $9$ & $10$ & $11$ \\
\hline
 $b_u$ & $41(43)$ & $87(90)$ & $165(180)$ & $298(331)$ & $495(566)$ & $780(916)$ & $1175(1415)$ & $1804(2105)$ \\
\hline
\end{tabular}}
\end{center}
\end{corollary}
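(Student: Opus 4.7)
The plan is to apply Corollary \ref{mainc} iteratively along $m$, keeping $s=3$ and $n=6$ fixed so that the second graph is always $K_7 - K_{1,3}$. The induction starts from the known value $r(K_3, K_7-K_{1,3}) = 18$ (which follows from \eqref{eq1} since $r(K_3,K_6)=18$), and at each step bootstraps the bound on $r(K_m, K_7-K_{1,3})$ to a bound on $r(K_{m+1}, K_7-K_{1,3})$.

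For the base case $m=4$, I would take $N = 41$, which is at least the best known upper bound on $r(K_4, K_6)$ in the Appendix, and then check the pigeonhole condition $\left\lceil 4(N-6)/6 \right\rceil = 24 \ge 18 = r(K_3, K_7-K_{1,3})$; Corollary \ref{mainc} then yields $r(K_4, K_7-K_{1,3}) \le 41$. For each subsequent $m \in \{5, 6, \ldots, 11\}$, I would take $N = u(m)$ as tabulated, chosen precisely so that $N$ is at least the best known upper bound on $r(K_m, K_6)$ from the Appendix, and verify inductively that $\left\lceil 4(u(m)-6)/6 \right\rceil \ge u(m-1)$; applying Corollary \ref{mainc} then gives $r(K_m, K_7-K_{1,3}) \le u(m)$.

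The only genuine obstacle is numerical bookkeeping: one must confirm, at each of the eight steps, both that the tabulated $u(m)$ dominates the Appendix's bound on $r(K_m, K_6)$ and that the pigeonhole inequality closes the induction. Because the ratio $(s+1)/n = 2/3$ is well below the growth rate of $r(K_m, K_6)$, the pigeonhole inequality is slack at every step, so these verifications reduce to routine arithmetic rather than any delicate balancing. No new combinatorial ingredient beyond Corollary \ref{mainc} and the value $r(K_3, K_7-K_{1,3}) = 18$ is required.
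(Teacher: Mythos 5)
Your proposal is correct and follows essentially the same route as the paper: apply Corollary \ref{mainc} with $s=3$, $n=6$, taking $N=u(m)$ at (or just above) the best known upper bound for $r(K_m,K_6)$ and inducting on $m$ from $r(K_3,K_7-K_{1,3})=18$. One small caveat: the pigeonhole inequality is not slack at every step --- at $m=10$ the Appendix bound $r(K_{10},K_6)\le 1171$ does not suffice, since $\left\lceil 4(N-6)/6\right\rceil\ge 780$ forces $N\ge 1175$, which is exactly why $u(10)=1175$ and the inequality there holds with equality.
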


\begin{proof}
It follows by Corollary~\ref{mainc}, by taking $s=3$ and $N$ equals to the best known upper bound for $r(K_n,K_6)$ when $n=5,6,7,8,9,11$ and $N=1175>1171\ge r(K_{10},K_6)$ when $n=10$. For instance, for (1) we take $N=41\ge r(K_4,K_6)$, $s=3$ and $n=6$. Then,$\left\lceil\frac{(s+1)(N-n)}{n}\right\rceil=\left\lceil\frac{4\times 35}{6}\right\rceil=24>r(K_3,K_7-K_{1,3})$ and, by Corollary~\ref{mainc}, $r(K_4,K_7-K_{1,3})\le 41$.
\end{proof}

\section{More equalities}\label{sec:4}

From \eqref{eqer} we have  that $r(K_4,K_{n+1}-K_{1,s})=r(K_4,K_n)$ if $s\ge n-\left\lceil\frac{n}{3}\right\rceil$. The latter yields to the following equalities.
$$
\begin{array}{ll}
r(K_4,K_7-K_{1,s})=r(K_4,K_6) \text{ if } s\ge 4,
&
r(K_4,K_8-K_{1,s})=r(K_4,K_7) \text{ if } s\ge 5, \\
r(K_4,K_9-K_{1,s})=r(K_4,K_8) \text{ if } s\ge 5,
&
r(K_4,K_{10}-K_{1,s})=r(K_4,K_9) \text{ if } s\ge 6, \\
r(K_4,K_{11}-K_{1,s})=r(K_4,K_{10}) \text{ if } s\ge 6,
&
r(K_4,K_{12}-K_{1,s})=r(K_4,K_{11}) \text{ if } s\ge 7, \\
r(K_4,K_{13}-K_{1,s})=r(K_4,K_{12}) \text{ if } s\ge 8,
&
r(K_4,K_{14}-K_{1,s})=r(K_4,K_{13}) \text{ if } s\ge 8, \\
r(K_4,K_{15}-K_{1,s})=r(K_4,K_{14}) \text{ if } s\ge 9,
&
r(K_4,K_{16}-K_{1,s})=r(K_4,K_{15}) \text{ if } s\ge 10.\\
\end{array}
$$
We are able to extend all these equalities for further values of $s$.
\begin{corollary}\label{coo1}\ \\
\begin{tabular}{@{\hspace{1pt}}cc}
\begin{minipage}{0.49\textwidth}
\small
\begin{enumerate}[(a)]
\item\label{C13a}
$r(K_4,K_7-K_{1,s})=r(K_4,K_6)$ for $s=3$.
\stepcounter{enumi}
\item\label{C13c}
$r(K_4,K_9-K_{1,s})=r(K_4,K_8)$ for $s=4$.
\stepcounter{enumi}
\item\label{C13e}
$r(K_4,K_{11}-K_{1,s})=r(K_4,K_{10})$ for $s=5$.
\stepcounter{enumi}
\item\label{C13g}
$r(K_4,K_{13}-K_{1,s})=r(K_4,K_{12})$ for $s=6,7$.
\stepcounter{enumi}
\item\label{C13i}
$r(K_4,K_{15}-K_{1,s})=r(K_4,K_{14})$ for $s=8$.
\end{enumerate}
\end{minipage}
&
\begin{minipage}{0.49\textwidth}
\small
\begin{enumerate}[(a)]
\stepcounter{enumi}
\item\label{C13b}
$r(K_4,K_8-K_{1,s})=r(K_4,K_7)$ for $s=3,4$.
\stepcounter{enumi}
\item\label{C13d}
$r(K_4,K_{10}-K_{1,s})=r(K_4,K_9)$ for $s=4,5$.
\stepcounter{enumi}
\item\label{C13f}
$r(K_4,K_{12}-K_{1,s})=r(K_4,K_{11})$ for $s=6$.
\stepcounter{enumi}
\item\label{C13h}
$r(K_4,K_{14}-K_{1,s})=r(K_4,K_{13})$ for $s=7$.
\stepcounter{enumi}
\item\label{C13j}
$r(K_4,K_{16}-K_{1,s})=r(K_4,K_{15})$ for $s=9$.
\end{enumerate}
\end{minipage}
\end{tabular}
\end{corollary}

\begin{proof}
(\ref{C13a}) Since $r(K_4,K_6)\ge 36$ it follows that $r(K_4,K_7-K_{1,3})\ge 36$ and by \eqref{eq1}, we have $r(K_3,K_7-K_{1,3})=r(K_3,K_6)=18$. Let us take $N=r(K_4,K_6)\ge 36$, $s=3$ and $n=6$. So, $\left\lceil\frac{(s+1)(N-n)}{n}\right\rceil\ge\left\lceil\frac{4\times 30}{6}\right\rceil=20>r(K_3,K_7-K_{1,3})=18$ and the result follows by Corollary~\ref{mainc}.
\smallskip

The proofs for the rest of the cases are analogues. We just need to check that conditions of Corollary~\ref{coo1} are satisfied  by taking: $N=r(K_4,K_7)\ge 49$ and checking that $r(K_3,K_8-K_{1,3})=r(K_3,K_7)=23$ for (\ref{C13b}), $N=r(K_4,K_8)\ge 58$ and checking that $r(K_3,K_9-K_{1,4})=r(K_3,K_8)=28$ for (\ref{C13c}) and so on. 
\end{proof}

We notice that, by using the same arguments as above, we could improve cases (\ref{C13e}) and (\ref{C13g}) by showing that $r(K_4,K_{11}-K_{1,4})=r(K_4,K_{10})$ when $r(K_4,K_{10})\neq 92$ and $r(K_4,K_{13}-K_{1,5})=r(K_4,K_{12})$ when $r(K_4,K_{12})\neq 128$. 
\smallskip

In view of Corollary~\ref{coo1}, we may pose the following question,

\begin{question}
Let $n\ge 7$ be an integer. For which integer $s$ the equality $r(K_4,K_n-K_{1,s})=r(K_4,K_{n-1})$ holds?
\end{question}

Or more ambitious, in view of \cite[Theorem 4]{BE89}, we may pose the following,

\begin{question}
Let $m\ge 4$ and $n\ge 7$ be integers. For which integer $s\le n-1$ the equality $r(K_m,K_n-K_{1,s})=r(K_m,K_{n-1})$ holds?
\end{question}

\section{Wheels versus $K_n-K_{1,s}$}\label{sec:7}

In this section we obtain further relating results by applying Theorem~\ref{mainth} to other graphs. Indeed, we may consider $G_1$ as the cycle on $n-1$ vertices $C_{n-1}$, and thus  $G^v_1$ will be the wheel $W_n$ by taking the new vertex $v$ incident to all the vertices of $C_{n-1}$.  

\begin{corollary}
\begin{enumerate}[(a)]
\item[]
\item\label{C14a}
$r(W_5,K_6-K_{1,s})=27$ for $s=3,4,5$.
\item\label{C14b}
$r(W_5,K_7-K_{1,s})=r(W_5,K_6)$ for $s=4,5,6$.
\item\label{C14c}
$r(W_5,K_8-K_{1,s})=r(W_5,K_7)$ for $s=4,5,6,7$.
\end{enumerate}
\end{corollary}

\begin{proof}
(\ref{C14a}) It is clear that $r(W_5,K_5)\le r(W_5,K_6-K_{1,s})$ for any $1\le s\le 5$.  Since  $r(W_5,K_5)=27$ (see \cite{Rad}), then $27\le r(W_5,K_6-K_{1,s})$. We will now show that $r(W_5,K_6-K_{1,s})\le 27$ for $3\le s\le 5$.  By taking $N=27$, $s\ge 3$ and $n=5$, we have that $\left\lceil\frac{(s+1)(N-n)}{n}\right\rceil\ge\left\lceil\frac{4\times 22}{5}\right\rceil=18=
r(C_4,K_6)\ge r(C_4,K_6-K_{1,s})$ and so, by Theorem~\ref{mainth}, we have $r(W_5,K_6-K_{1,s})\le 27$, and the result follows.
\medskip

The proofs for (\ref{C14b}) and (\ref{C14c}) are analogues. We just need to check that conditions of Theorem~\ref{mainth} are satisfied by taking: $N=r(W_5,K_6)\ge 33$ for (\ref{C14b}) and $N=r(W_5,K_7)\ge 43$ for (\ref{C14c}) (see \cite{Rad} for the lower bounds of $r(W_5,K_6)$ and $r(W_5,K_7)$).
\end{proof}

\section*{Acknowledgments}
The authors would like to thank the anonymous referees for their useful remarks and comments.

\section*{Appendix}
The following table was obtained from \cite{Rad}.

\begin{center}
\begin{table}[h] 
\centering
\footnotesize
\noindent\makebox[\textwidth]{%
\begin{tabular}{|c|c|c|c|c|c|c|c|c|}
 \hline
 & $K_3$ & $K_4$ &$K_5$ &$K_6$ &$K_7$ &$K_8$ &$K_9$ &$K_{10}$\\
\hline
$K_3$ & 6 & 9 & 14 & 18 & 23 & 28 & 36 & [40,42]\\
\hline
$K_4$ & &18 & 25 & [36,41] & [49, 61] & [58,84] & 73,115] & [92,149]\\
\hline
$K_5$  & & & [43,49] & [58,87] & [80,143] & [101,216] & [126,316] & [144,442]\\
\hline
$K_6$  & & & & [102,165] & [113,298] & [132,495] & [169,780] & [179,1171]\\ 
\hline          
\end{tabular}}
\vspace{.2cm}
\caption{\label{current1}Some known bounds and values of $r(K_m,K_n)$.}
\end{table}
\end{center}

\end{document}